\documentclass[11pt]{article}

\usepackage{fullpage}
\usepackage{amsmath,amsthm,amsfonts,dsfont}
\usepackage{amssymb,latexsym,graphicx}
\usepackage{palatino}
\usepackage{mathpazo}
\usepackage{stmaryrd}
\usepackage{mathtools}
\usepackage{hyperref}
\usepackage{graphicx}
\usepackage{caption}
\usepackage{subcaption}

\newtheorem{theorem}{Theorem}[section]

\newtheorem{proposition}[theorem]{Proposition}

\newtheorem{corollary}[theorem]{Corollary}

\newcommand{\R}{\ensuremath{\mathbb{R}}}
\newcommand{\Z}{\ensuremath{\mathbb{Z}}}

\renewcommand{\epsilon}{\varepsilon}

\renewcommand{\vec}[1]{\ensuremath{\mathbf{#1}}}

\newcommand{\basis}{\ensuremath{\mathbf{B}}}

\DeclareMathOperator*{\expect}{\mathbb{E}}

\newcommand{\grad}{\nabla}

\newcommand{\scarequotes}[1]{``#1''}

\newcommand{\M}{\mathcal{M}}
\newcommand{\NN}{\mathcal{N}}

\makeatletter
\def\imod#1{\allowbreak\mkern8mu({\operator@font mod}\,\,#1)}
\makeatother

\newcommand{\lat}{\mathcal{L}}
\DeclarePairedDelimiter\inner{\langle}{\rangle}
\DeclarePairedDelimiter\length{\lVert}{\rVert}

\begin{document}

\title{
An Inequality for Gaussians on Lattices}
\author{
Oded Regev\thanks{Courant Institute of Mathematical Sciences, New York
 University.}
~\thanks{Supported by the Simons Collaboration on Algorithms and Geometry and by the National Science Foundation (NSF) under Grant No.~CCF-1320188. Any opinions, findings, and conclusions or recommendations expressed in this material are those of the authors and do not necessarily reflect the views of the NSF.}\\
\and
Noah Stephens-Davidowitz\footnotemark[1]~\thanks{Supported by the National Science Foundation (NSF) under Grant No.~CCF-1320188. Any opinions, findings, and conclusions or recommendations expressed in this material are those of the authors and do not necessarily reflect the views of the NSF.}\\
\texttt{noahsd@cs.nyu.edu}
}
\date{}
\maketitle

\begin{abstract}
We show that for any lattice $\lat \subseteq \R^n$ and vectors $\vec{x}, \vec{y} \in \R^n$, 
\[
\rho(\lat + \vec{x})^2 \rho(\lat + \vec{y})^2 \leq \rho(\lat)^2 \rho(\lat + \vec{x} + \vec{y}) \rho(\lat + \vec{x} - \vec{y})
\; ,
\]
where $\rho$ is the Gaussian
mass function $\rho(A) := \sum_{\vec{w} \in A} \exp(-\pi \length{\vec{w}}^2)$. We show a number of applications, including bounds on the moments of the discrete Gaussian distribution, various monotonicity properties of the heat kernel on flat tori, and a 
positive correlation inequality for Gaussian measures on lattices.
\end{abstract}

\section{Introduction}
A lattice $\lat\subset \R^n$ is the set of all integer linear combinations of $n$ linearly independent vectors $\basis = (\vec{b}_1, \ldots, \vec{b}_n )$. 
For any $s>0$, we define the function $\rho_s : \R^n \rightarrow\R$ as
\[
\rho_s(\vec{x}) = \exp(-\pi \length{\vec{x}}^2/s^2) \; .
\] 
For a discrete set $A \subset \R^n$ we define $\rho_s(A)=\sum_{\vec{w}\in A} \rho_s(\vec{w})$. 
The \emph{discrete Gaussian distribution} over a lattice coset $\lat + \vec{x}$ with parameter $s$, $D_{\lat + \vec{x},s}$, is the probability distribution over $\lat + \vec{x}$ that assigns probability  
\[
\frac{\rho_s(\vec{w})}{\rho_s(\lat + \vec{x})}
\]
to each vector $\vec{w} \in \lat + \vec{x}$.
(See Figure~\ref{fig:DGS}.)
The \emph{periodic Gaussian function} over $\lat$ with parameter $s$ is
\[
f_{\lat, s}(\vec{x}) := \frac{\rho_s(\lat + \vec{x})}{\rho_s(\lat)} \; . \]
(See Figure~\ref{fig:periodic}.)
When $s = 1$, we write $\rho(\vec{x})$, $D_{\lat + \vec{x}}$, and $f_\lat(\vec{x})$. 

\begin{figure}[ht]
\begin{center}
\begin{subfigure}[b]{0.4\textwidth}
                \includegraphics[width=\textwidth]{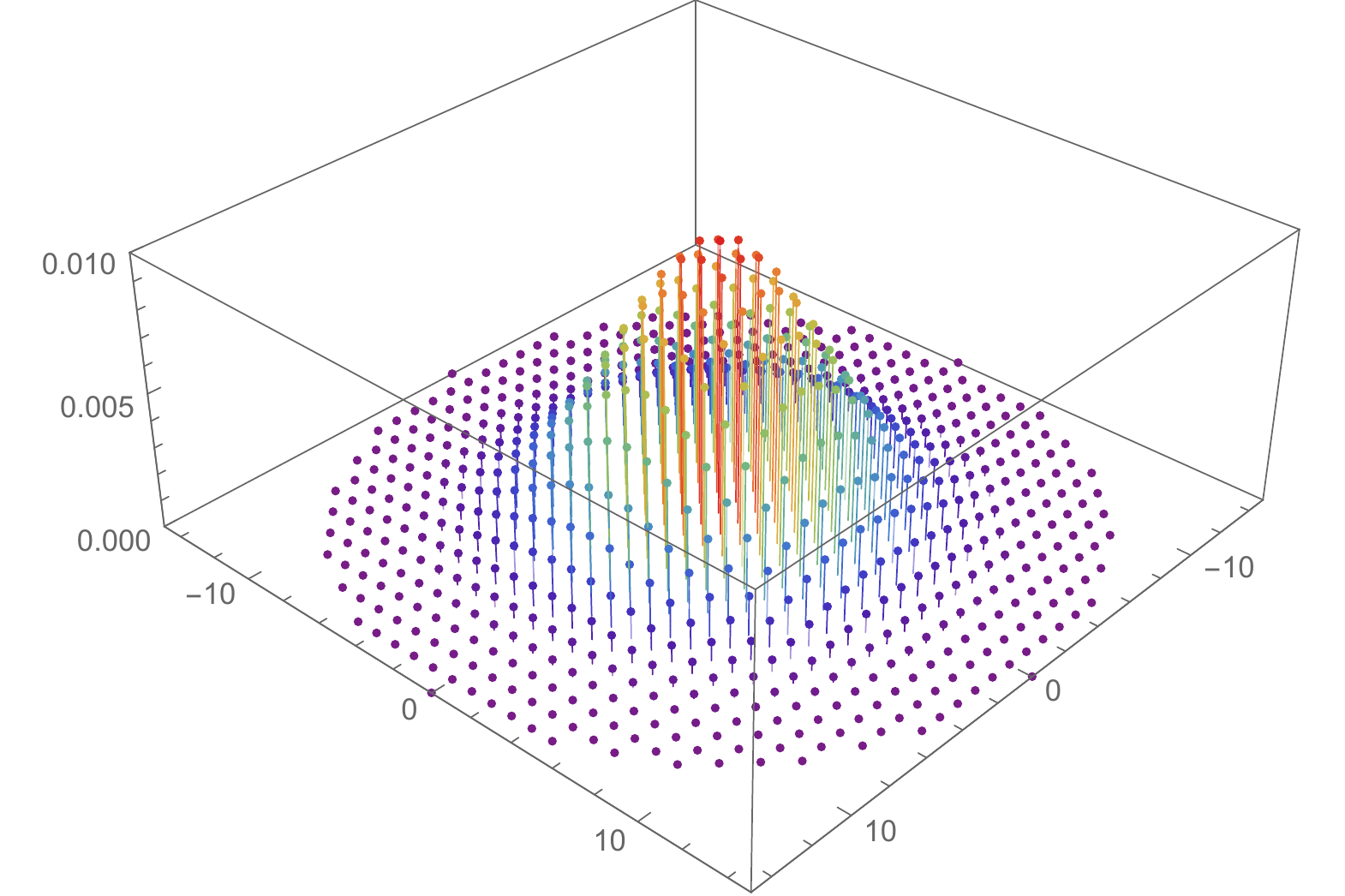}
\caption{\label{fig:DGS} 
The discrete Gaussian distribution on $\Z^2$ with parameter $s=10$.}
        \end{subfigure} \qquad
\begin{subfigure}[b]{0.4\textwidth}
\includegraphics[width= 0.8\textwidth]{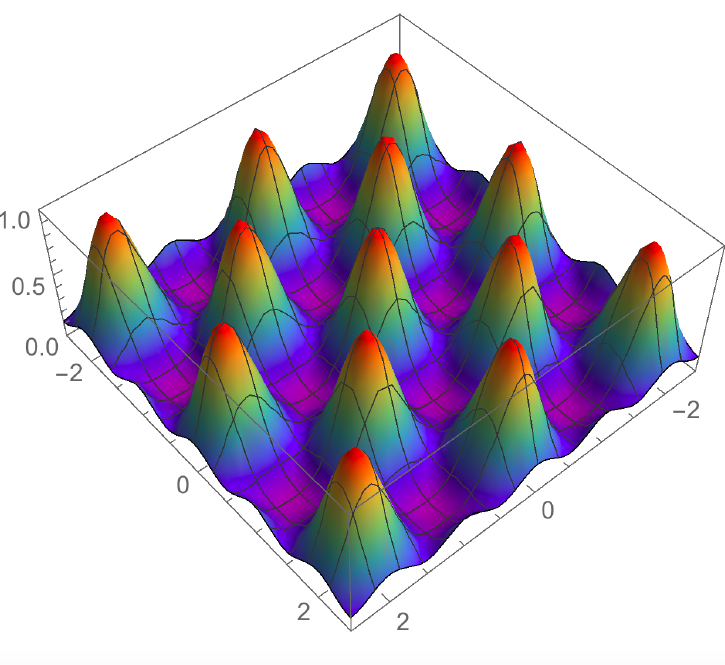}
\caption{\label{fig:periodic} 
The periodic Gaussian function on the lattice spanned by $(1,1) $ and $(1,-1)$ with parameter $s=3/4$.}
        \end{subfigure}
\caption{ }
\end{center}
\end{figure}

These objects appear in several guises in mathematics and are well studied.
For example, $\rho(\lat + \vec{x})$ is the Riemann theta function in a dual form (see, e.g.,~\cite{Mumford}) and was studied in connection with the Riemann zeta function~\cite{Riemann, BianePY01}; it can also be seen as the heat kernel on the flat torus $\R^n/\lat$; 
it played an instrumental role in proving tight transference theorems for lattices~\cite{banaszczyk}; it was used to construct bilipschitz embeddings
of flat tori into a Hilbert space~\cite{HavivR13}; and the authors recently used it to bound the number of short lattice points~\cite{rev_mink}. Both $D_{\lat + \vec{x}}$ and $f_\lat$ have also played an important role in recent years
in computer science, especially in cryptographic applications of lattices (e.g.,~\cite{MR04,GPV08}).
Our motivation comes from attempts to improve upon the current fastest known algorithms for the main computational problems
on lattices, the Shortest Vector Problem~\cite{AggarwalDRS15} and the Closest Vector Problem~\cite{ADS15}. (Both algorithms rely on special cases of the main result of this work.)

In spite of their importance, there is still a lot that we do not know about $\rho_s(\lat + \vec{x})$, $f_{\lat, s}(\vec{x})$, and $D_{\lat + \vec{x}, s}$. 
In this work, we prove several basic inequalities concerning these objects, 
as described below. 
All of these inequalities
follow without too much effort
from one main inequality (Theorem~\ref{thm:awesometheorem}), which is closely related to Riemann's theta relations (see~\cite{Mumford}). 
Namely, in terms of the periodic Gaussian $f_{\lat}(\vec{x})$, our main inequality says that 
\[
f_{\lat}(\vec{x})^2 f_{\lat}(\vec{y})^2 \leq f_{\lat}(\vec{x} + \vec{y}) f_{\lat}(\vec{x} - \vec{y})
\; .
\]
Note that the Gaussian function $\rho(\vec{x})$ over $\R^n$ satisfies the ``rotation'' identity
\[
\rho(\vec{x})^2 \rho(\vec{y})^2 = \rho(\vec{x} + \vec{y}) \rho(\vec{x} - \vec{y})
\; ,
\]
so that our main inequality can be viewed as a relaxation of this identity to the periodic case.
From this (perhaps rather opaque) inequality, we derive many natural statements concerning $\rho_s(\lat + \vec{x})$, $f_{\lat, s}(\vec{x})$, and $D_{\lat + \vec{x}, s}$.

First, we show in Corollary~\ref{cor:secondmoment} that the covariance of $D_{\lat + \vec{x}}$ is minimized when $\vec{x} = \vec0$,
answering a natural question communicated to us by Dadush~\cite{priv:Daniel}. 
(We note in passing that 
closely related questions are still open, e.g., 
whether $\expect_{\vec{w} \sim D_{\lat + \vec{x}}}[\length{\vec{w}}]$ is minimized when $\vec{x} = \vec0$.)
Along the way, we derive an interesting inequality concerning the \scarequotes{shape} of $f_{\lat}(\vec{x})$ (Proposition~\ref{prop:Hess}). We also analyze the fourth moment, showing in particular that the discrete Gaussian is \scarequotes{leptokurtic} (Proposition~\ref{prop:fourthmoment})---i.e., its kurtosis is at least that of the \emph{continuous} Gaussian distribution.

Second, in Section~\ref{sec:monotone}, we show various monotonicity results concerning $f_{\lat, s}$, answering a natural open question due to Price~\cite{PriceMO1} in the affirmative. In particular, in Proposition~\ref{prop:monotones} we show that $f_{\lat, s}$ is monotonic in $s$ and in 
Proposition~\ref{prop:monocovariance}, we extend this to the non-spherical Gaussian case. 
(Recently, Price showed how to derive from this an analogous monotonicity result for Abelian Cayley graphs~\cite{Price16}.
A further extension to arbitrary Cayley graphs, previously 
suggested by Peres~\cite{PeresPrivate}, turns out to be false~\cite{RegevShinkar}.)
Additionally, in Proposition~\ref{prop:monotonesublattice}, we show that $f_{\lat, s}$ is monotonic under taking sublattices of $\lat$. 

Finally, in Section~\ref{sec:correlation}, we show that sublattices of a lattice $\lat$ are positively correlated under the normalized Gaussian measure on $\lat$. This result answers another open question due to Price~\cite{PriceMO2}, and
was recently used by him in his work on cohomology~\cite{PricePaper}. 
It has a (possibly superficial) resemblance to the recently proven Gaussian correlation conjecture on symmetric convex bodies~\cite{RoyenGCC}.  In fact, we note in passing that our main inequality can also be viewed as a correlation result. In particular, it shows that $\cos(2\pi \inner{\vec{w}, \vec{x}})$ and $\cos(2\pi \inner{\vec{w}, \vec{y}})$ are positively correlated when $\vec{w}$ is sampled from $D_\lat$. (See Eq.~\ref{eq:coscorrelation}.)

\section{The main inequality}

The following is our main theorem. The proof is essentially a combination 
of a certain identity related to Riemann's theta relations (see~\cite[Chapter 1, Section 5]{Mumford}) and
the Cauchy-Schwarz inequality.

\begin{theorem}
\label{thm:awesometheorem}
For any lattice $\lat \subset \R^n$ and any two vectors $\vec{x}, \vec{y} \in \R^n$, we have
\[
\rho(\lat + \vec{x})^2 \rho(\lat + \vec{y})^2 \leq \rho(\lat)^2 \rho(\lat + \vec{x} + \vec{y}) \rho(\lat + \vec{x} - \vec{y})
\; .
\]
\end{theorem}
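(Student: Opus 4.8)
The plan is to deduce the inequality from a single exact identity — a form of one of Riemann's theta relations — followed by one application of the Cauchy--Schwarz inequality. Write $2\lat = \{2\vec\ell : \vec\ell \in \lat\}$, let $\vec c$ range over a complete set of coset representatives of $\lat/2\lat$ (so that $|\lat/2\lat| = 2^n$), and define the $2\lat$-periodic function $g(\vec u) := \rho_{\sqrt 2}(2\lat + \vec u)$. The identity I would establish is
\[
\rho(\lat + \vec x)\,\rho(\lat + \vec y) \;=\; \sum_{\vec c \in \lat/2\lat} g(\vec c + \vec x + \vec y)\; g(\vec c + \vec x - \vec y) \qquad\text{for all } \vec x, \vec y \in \R^n .
\]
Granting this, the theorem is almost immediate. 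Setting $\vec y = \vec 0$ gives the special case $\sum_{\vec c} g(\vec c + \vec u)^2 = \rho(\lat)\,\rho(\lat + \vec u)$ for every $\vec u$. For general $\vec x, \vec y$, the identity displays $\rho(\lat + \vec x)\rho(\lat + \vec y)$ as an inner product of the nonnegative vectors $(g(\vec c + \vec x + \vec y))_{\vec c}$ and $(g(\vec c + \vec x - \vec y))_{\vec c}$; Cauchy--Schwarz bounds its square by the product of the two squared norms, and the special case (applied with $\vec u = \vec x + \vec y$ and with $\vec u = \vec x - \vec y$) rewrites that product as exactly $\rho(\lat)^2\,\rho(\lat + \vec x + \vec y)\,\rho(\lat + \vec x - \vec y)$, which is the claim.

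To prove the identity I would lift to $\R^{2n}$. Since $\rho$ depends only on the norm and is multiplicative over orthogonal direct sums, $\rho(\lat + \vec x)\rho(\lat + \vec y) = \rho\bigl((\lat + \vec x)\times(\lat + \vec y)\bigr)$, and this quantity is invariant under the orthogonal involution $T(\vec u, \vec v) = \tfrac1{\sqrt 2}(\vec u + \vec v,\; \vec u - \vec v)$ of $\R^{2n}$. Applying $T$ maps $(\lat + \vec x)\times(\lat + \vec y)$ to $\widetilde\lat + \tfrac1{\sqrt2}(\vec x + \vec y,\; \vec x - \vec y)$, where $\widetilde\lat := \tfrac1{\sqrt2}\{(\vec\ell_1 + \vec\ell_2,\; \vec\ell_1 - \vec\ell_2) : \vec\ell_1, \vec\ell_2 \in \lat\}$. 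The combinatorial core is that $\vec\ell_1 + \vec\ell_2$ and $\vec\ell_1 - \vec\ell_2$ always lie in the same coset of $2\lat$ (their difference is $2\vec\ell_2$), and that conversely any pair of vectors both congruent to a fixed $\vec c$ modulo $2\lat$ arises in this way; hence $\widetilde\lat = \bigsqcup_{\vec c \in \lat/2\lat} \tfrac1{\sqrt 2}\bigl((2\lat + \vec c)\times(2\lat + \vec c)\bigr)$, a disjoint union whose consistency I would confirm by a covolume count ($\widetilde\lat$ has index $2^n = |\lat/2\lat|$ over $\tfrac1{\sqrt2}(2\lat\times 2\lat)$). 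Splitting $\rho$ over this union, shifting by $\tfrac1{\sqrt2}(\vec x + \vec y,\vec x - \vec y)$, and using $\rho(\tfrac1{\sqrt2} A) = \rho_{\sqrt 2}(A)$ produces the identity.

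The only real obstacle is the bookkeeping in the previous paragraph: pinning down $\widetilde\lat$ and its coset decomposition over $\lat/2\lat$, and carrying the $\sqrt 2$-rescalings so that $g$ emerges with parameter exactly $\sqrt 2$. Once the theta relation is in hand, the choice of specialization ($\vec y = \vec 0$) and the single invocation of Cauchy--Schwarz are routine. As a sanity check, for the trivial lattice $\lat = \{\vec 0\}$ the sum over $\lat/2\lat$ collapses to a single term, Cauchy--Schwarz is an equality, and one recovers the parallelogram-law identity $\rho(\vec x)^2 \rho(\vec y)^2 = \rho(\vec x + \vec y)\rho(\vec x - \vec y)$ noted in the introduction — so the content of the theorem is precisely the defect of Cauchy--Schwarz on the finite group $\lat/2\lat$.
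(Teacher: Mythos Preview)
Your proposal is correct and follows essentially the same route as the paper: lift to $\R^{2n}$, apply the (scaled) orthogonal rotation $(\vec u,\vec v)\mapsto(\vec u+\vec v,\vec u-\vec v)$, decompose the image lattice as $\bigsqcup_{\vec c\in\lat/2\lat}(2\lat+\vec c)^2$ to obtain the theta-type identity, and then apply Cauchy--Schwarz together with the $\vec y=\vec 0$ specialization. The only differences are cosmetic (you normalize $T$ to be orthogonal and absorb the $\sqrt 2$ at the end via $\rho(\tfrac1{\sqrt2}A)=\rho_{\sqrt2}(A)$, whereas the paper keeps $T$ unnormalized and switches to $\rho_{\sqrt2}$ immediately).
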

\begin{proof}
Let $\lat^{\oplus 2} := \lat \oplus \lat$. We can then write $\rho(\lat + \vec{x}) \rho(\lat + \vec{y}) = \rho(\lat^{\oplus 2} + (\vec{x}, \vec{y}))$.
Consider the $2n \times 2n$ matrix
\[
T := \left( \begin{array}{rr}
I_n & I_n \\
I_n & -I_n
\end{array}\right)
\; ,
\]
where $I_n$ is the $n\times n$ identity matrix.
Note that $T/\sqrt{2}$ is an orthogonal matrix so that $\length{T\vec{v}} = \sqrt{2} \length{\vec{v}}$ for any $\vec{v} \in \R^{2n}$. We therefore have 
\begin{equation}
\label{eq:rotated}
\rho(\lat + \vec{x}) \rho(\lat + \vec{y}) = \rho_{\sqrt{2}}\big(T(\lat^{\oplus 2} + (\vec{x}, \vec{y}))\big) = \rho_{\sqrt{2}}\big(T \lat^{\oplus 2} + (\vec{x} + \vec{y}, \vec{x} - \vec{y})\big)
\; .
\end{equation}
For any $\vec{z} := (\vec{z}_1, \vec{z}_2) \in \lat^{\oplus 2}$, we have
$T \vec{z} = (\vec{w}_1, \vec{w}_2)$
where $\vec{w}_1 :=  \vec{z}_1 + \vec{z}_2$ and $\vec{w}_2 := \vec{w}_1 - 2 \vec{z}_2$.
It follows that
\begin{align*}
T \lat^{\oplus 2} &= \{ (\vec{w}_1, \vec{w}_2) \in \lat^{2} : \vec{w}_1 \equiv \vec{w}_2 \bmod 2 \lat\} \\
&= \bigcup_{\vec{c} \in \lat/(2\lat)} (2 \lat + \vec{c})^{2}
\; ,
\end{align*}
where the union is disjoint.
Plugging in to Eq.~\eqref{eq:rotated}, we have
\begin{equation}
\rho(\lat + \vec{x}) \rho(\lat + \vec{y}) = \sum_{\vec{c} \in \lat/(2\lat)} \rho_{\sqrt{2}}(2 \lat + \vec{c} + \vec{x} + \vec{y}) \cdot \rho_{\sqrt{2}}(2 \lat + \vec{c} + \vec{x} - \vec{y})
\; .
\label{eq:cauchyschwarz}
\end{equation}
Note that, by the right-hand side of~\eqref{eq:cauchyschwarz}, we can view $\rho(\lat + \vec{x}) \rho(\lat + \vec{y})$ as the inner product of two vectors, 
\begin{equation}
\label{eq:rho_h}
\rho(\lat + \vec{x}) \rho(\lat + \vec{y}) = \inner{\vec{h}(\vec{x} + \vec{y}), \vec{h}(\vec{x} - \vec{y})}
\; ,
\end{equation}
where
\[
h(\vec{z}) := \big( \rho_{\sqrt{2}}(2 \lat + \vec{z} + \vec{c}_1),  \rho_{\sqrt{2}}(2 \lat +\vec{z} + \vec{c}_2), \ldots, \rho_{\sqrt{2}}(2\lat + \vec{z} + \vec{c}_{2^n}) \big) \in \R^{2^n}
\; ,
\]
for some ordering of the cosets $\vec{c}_i \in \lat/(2\lat)$.
Then, by Cauchy-Schwarz, we have
\[
\rho(\lat + \vec{x})^2 \rho(\lat + \vec{y})^2 \leq  \length{\vec{h}(\vec{x} + \vec{y})}^2\length{\vec{h}(\vec{x} - \vec{y})}^2 = \rho(\lat)^2 \rho(\lat + \vec{x} + \vec{y}) \rho(\lat + \vec{x} - \vec{y}) \; ,
\]
where the last equality follows from plugging in $\vec y = \vec 0$ to Eq.~\eqref{eq:rho_h} which tells us that $\|\vec{h}(\vec{z})\|^2 = \rho(\lat) \rho(\lat + \vec{z})$.
\end{proof}

We remark that using the same proof with other transformations $T$ might
lead to other such inequalities. We leave this for future
work and proceed to list a few immediate corollaries of 
Theorem~\ref{thm:awesometheorem}.

\begin{corollary}
	\label{cor:main_theorem}
For any lattice $\lat \subset \R^n$ and any two vectors $\vec{x}, \vec{y} \in \R^n$, we have
\begin{subequations}
\label{eqs:corollaries}
\begin{align}
\label{eq:periodicform}
f_\lat(\vec{x})^2 f_\lat(\vec{y})^2 &\leq f_\lat(\vec{x} + \vec{y})f_\lat(\vec{x} - \vec{y})\\
\label{eq:f2x}
f_\lat(\vec{x})^4 &\leq f_\lat(2\vec{x})\\
\label{eq:additiveform}
f_\lat(\vec{x})f_\lat(\vec{y}) &\leq (f_\lat(\vec{x} + \vec{y}) + f_\lat(\vec{x} - \vec{y}))/2\\
\label{eq:strongcoscorrelation}
\expect_{\vec{w} \sim D_{\lat}}[\cos(2\pi \inner{\vec{w}, \vec{x}})]^2\expect_{\vec{w} \sim D_{\lat}}[\cos(2\pi \inner{\vec{w}, \vec{y}})]^2 &\leq \expect_{\vec{w} \sim D_{\lat}}[\cos(2\pi \inner{\vec{w}, \vec{x}})\cos(2\pi \inner{\vec{w}, \vec{y}})]^2\\
&\qquad - \expect_{\vec{w} \sim D_{\lat}}[\sin(2\pi \inner{\vec{w}, \vec{x}})\sin(2\pi \inner{\vec{w}, \vec{y}})]^2 \nonumber
\\
\label{eq:coscorrelation}
\expect_{\vec{w} \sim D_{\lat}}[\cos(2\pi \inner{\vec{w}, \vec{x}})]\expect_{\vec{w} \sim D_{\lat}}[\cos(2\pi \inner{\vec{w}, \vec{y}})] &\leq \expect_{\vec{w} \sim D_{\lat}}[\cos(2\pi \inner{\vec{w}, \vec{x}})\cos(2\pi \inner{\vec{w}, \vec{y}})] \; .
\end{align}
\end{subequations}
\end{corollary}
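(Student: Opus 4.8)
The plan is to deduce all five inequalities from Theorem~\ref{thm:awesometheorem}. The first three, \eqref{eq:periodicform}--\eqref{eq:additiveform}, are purely formal rearrangements; the two trigonometric statements \eqref{eq:strongcoscorrelation} and \eqref{eq:coscorrelation} then follow from the first three applied to the dual lattice, together with one extra ingredient (Poisson summation). Dividing the inequality of Theorem~\ref{thm:awesometheorem} through by $\rho(\lat)^4$ and recalling $f_\lat(\vec{z}) = \rho(\lat+\vec{z})/\rho(\lat)$ gives \eqref{eq:periodicform} at once. Setting $\vec{y}=\vec{x}$ in \eqref{eq:periodicform} and using $f_\lat(\vec0) = \rho(\lat)/\rho(\lat) = 1$ gives \eqref{eq:f2x}. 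For \eqref{eq:additiveform}, note that $\rho(\lat+\vec{z})$ is a sum of strictly positive terms, so $f_\lat$ is everywhere positive; taking square roots in \eqref{eq:periodicform} gives $f_\lat(\vec{x})f_\lat(\vec{y}) \le \sqrt{f_\lat(\vec{x}+\vec{y})f_\lat(\vec{x}-\vec{y})}$, and the AM--GM bound $\sqrt{ab}\le(a+b)/2$ for $a,b\ge0$ finishes it.

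For the trigonometric inequalities, the key step is to recognize, via the Poisson summation formula, that $f_{\lat^*}$ (where $\lat^*$ is the dual lattice) equals a cosine expectation under $D_\lat$. Applying Poisson summation to $\vec{t}\mapsto\rho(\vec{t})e^{2\pi i\inner{\vec{t},\vec{z}}}$ on $\lat$, using $\widehat{\rho}=\rho$ and discarding the imaginary part (which vanishes under the substitution $\vec{w}\mapsto-\vec{w}$), I get $\expect_{\vec{w}\sim D_\lat}[\cos(2\pi\inner{\vec{w},\vec{z}})] = \rho(\lat^*+\vec{z})/\rho(\lat^*) = f_{\lat^*}(\vec{z})$ for every $\vec{z}\in\R^n$. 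Having established \eqref{eq:periodicform}--\eqref{eq:additiveform} for arbitrary lattices, I now apply them with $\lat^*$ in place of $\lat$ and expand using the cosine addition formulas $\cos(2\pi\inner{\vec{w},\vec{x}\pm\vec{y}}) = \cos(2\pi\inner{\vec{w},\vec{x}})\cos(2\pi\inner{\vec{w},\vec{y}}) \mp \sin(2\pi\inner{\vec{w},\vec{x}})\sin(2\pi\inner{\vec{w},\vec{y}})$. Setting $A := \expect_{\vec{w}\sim D_\lat}[\cos(2\pi\inner{\vec{w},\vec{x}})\cos(2\pi\inner{\vec{w},\vec{y}})]$ and $B := \expect_{\vec{w}\sim D_\lat}[\sin(2\pi\inner{\vec{w},\vec{x}})\sin(2\pi\inner{\vec{w},\vec{y}})]$, linearity of expectation gives $f_{\lat^*}(\vec{x}+\vec{y}) = A-B$ and $f_{\lat^*}(\vec{x}-\vec{y}) = A+B$. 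Then \eqref{eq:periodicform} for $\lat^*$ reads $f_{\lat^*}(\vec{x})^2 f_{\lat^*}(\vec{y})^2 \le (A-B)(A+B) = A^2-B^2$, which is exactly \eqref{eq:strongcoscorrelation}; and \eqref{eq:additiveform} for $\lat^*$ reads $f_{\lat^*}(\vec{x})f_{\lat^*}(\vec{y}) \le \tfrac12\bigl((A-B)+(A+B)\bigr) = A$, which is exactly \eqref{eq:coscorrelation}.

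I do not anticipate a genuine obstacle; the only step requiring real care is the Poisson-summation identity for $\expect_{\vec{w}\sim D_\lat}[\cos(2\pi\inner{\vec{w},\vec{z}})]$, where one must keep the roles of $\lat$ and its dual straight and invoke the self-duality of the Gaussian $\rho$ under the Fourier transform. One minor additional check, implicit in reading \eqref{eq:coscorrelation} as a correlation statement, is that the cross term $A$ is nonnegative; but this is automatic, since $A = \tfrac12\bigl(f_{\lat^*}(\vec{x}+\vec{y})+f_{\lat^*}(\vec{x}-\vec{y})\bigr)$ and $f_{\lat^*}$ is everywhere positive.
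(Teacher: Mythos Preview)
Your proposal is correct and follows essentially the same route as the paper: \eqref{eq:periodicform}--\eqref{eq:additiveform} are obtained by dividing Theorem~\ref{thm:awesometheorem} by $\rho(\lat)^4$, specializing $\vec{y}=\vec{x}$, and applying AM--GM, while \eqref{eq:strongcoscorrelation} and \eqref{eq:coscorrelation} come from the Poisson-summation identity $f_{\lat^*}(\vec{z}) = \expect_{\vec{w}\sim D_\lat}[\cos(2\pi\inner{\vec{w},\vec{z}})]$ combined with the cosine addition formula. Your exposition is slightly more explicit (introducing $A,B$ and verifying $A\ge 0$), but the argument is the same.
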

\begin{proof}
Eq.~\eqref{eq:periodicform} follows from the definition of $f_\lat$. Eq.~\eqref{eq:f2x} follows from plugging in $\vec{y} = \vec{x}$ to Eq.~\eqref{eq:periodicform}. Eq.~\eqref{eq:additiveform} follows from the fact that $\sqrt{ab} \leq (a+b)/2$ for all $a, b \ge 0$. 
For Eq.~\eqref{eq:strongcoscorrelation}, use the Poisson summation formula to write $f_{\lat^*}(\vec{x})$ in its dual form as
\begin{equation*}
f_{\lat^*}(\vec{x}) = \expect_{\vec{w} \sim D_{\lat}}[\cos(2\pi \inner{\vec{w}, \vec{x}})]
\;,
\end{equation*}
where $\lat^*$ is the dual lattice. 
We can then apply the identity $\cos(a + b) = \cos(a)\cos(b) - \sin(a) \sin(b)$ to derive Eq.~\eqref{eq:strongcoscorrelation} from Eq.~\eqref{eq:periodicform}. Finally, Eq.~\eqref{eq:coscorrelation} follows from applying the same analysis to \eqref{eq:additiveform}. 
\end{proof}

\section{Moments of the discrete Gaussian distribution}
\label{sec:moments}

We will need the Hessian product identity
\begin{equation}
\label{eq:Hess_prod}
H \big(f(\vec{x})  g(\vec{x})\big) = f(\vec{x}) H g(\vec{x}) + g(\vec{x}) H f(\vec{x}) + \grad f(\vec{x}) (\grad g(\vec{x}))^T + \grad g(\vec{x}) (\grad f(\vec{x}))^T
\; .
\end{equation}

We next show an inequality concerning the Hessian of $f_{\lat}$. 
In particular, this inequality constrains the shape of the local maxima of $f_\lat$. 
(As observed in \cite{cvpp}, $f_\lat$ can in fact have local maxima at non-lattice points.)

\begin{proposition}
\label{prop:Hess}
For any lattice $\lat \subset \R^n$ and any vector $\vec{x} \in \R^n$, we have the positive semidefinite inequality 
\[
 \frac{Hf_\lat(\vec{x})}{f_\lat(\vec{x})} \succeq  Hf_\lat (\vec0) + \frac{\grad f_\lat(\vec{x}) (\grad f_\lat(\vec{x}))^T}{f_\lat(\vec{x})^2} \; .
\]
\end{proposition}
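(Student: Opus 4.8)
The plan is to obtain the claimed positive semidefinite inequality by differentiating the main inequality~\eqref{eq:periodicform} twice at the origin, in the $\vec{y}$ variable. Fix $\vec{x}$ and consider the function $g(\vec{y}) := f_\lat(\vec{x} + \vec{y}) f_\lat(\vec{x} - \vec{y}) - f_\lat(\vec{x})^2 f_\lat(\vec{y})^2$. By Corollary~\ref{cor:main_theorem}, $g(\vec{y}) \ge 0$ for all $\vec{y}$, and $g(\vec0) = 0$, so $\vec0$ is a global minimum of $g$. Consequently the Hessian $H_{\vec{y}} g(\vec0)$ must be positive semidefinite. The task is then to compute this Hessian explicitly and check that it equals (a positive multiple of) the difference of the two sides in the statement.

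The computation proceeds term by term using the Hessian product identity~\eqref{eq:Hess_prod}. For the first term $f_\lat(\vec{x}+\vec{y}) f_\lat(\vec{x}-\vec{y})$, write $u(\vec{y}) := f_\lat(\vec{x}+\vec{y})$ and $v(\vec{y}) := f_\lat(\vec{x}-\vec{y})$; at $\vec{y} = \vec0$ we have $u = v = f_\lat(\vec{x})$, $\grad u(\vec0) = \grad f_\lat(\vec{x})$, $\grad v(\vec0) = -\grad f_\lat(\vec{x})$, and $Hu(\vec0) = Hv(\vec0) = Hf_\lat(\vec{x})$. Plugging into~\eqref{eq:Hess_prod} gives $H_{\vec{y}}\big(uv\big)(\vec0) = 2 f_\lat(\vec{x}) Hf_\lat(\vec{x}) - 2\, \grad f_\lat(\vec{x})(\grad f_\lat(\vec{x}))^T$. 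For the second term $f_\lat(\vec{x})^2 f_\lat(\vec{y})^2$, the scalar $f_\lat(\vec{x})^2$ is constant in $\vec{y}$, and since $f_\lat$ has a critical point at $\vec0$ (it is even, so $\grad f_\lat(\vec0) = \vec0$), applying~\eqref{eq:Hess_prod} to $f_\lat(\vec{y})^2$ at $\vec{y}=\vec0$ yields $H_{\vec{y}}\big(f_\lat(\vec{y})^2\big)(\vec0) = 2 f_\lat(\vec0) Hf_\lat(\vec0) = 2 Hf_\lat(\vec0)$, using $f_\lat(\vec0) = 1$. So $H_{\vec{y}} g(\vec0) = 2 f_\lat(\vec{x}) Hf_\lat(\vec{x}) - 2\, \grad f_\lat(\vec{x})(\grad f_\lat(\vec{x}))^T - 2 f_\lat(\vec{x})^2 Hf_\lat(\vec0) \succeq 0$. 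Dividing by $2 f_\lat(\vec{x})^2 > 0$ and rearranging gives exactly the stated inequality.

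I expect the only real subtlety to be bookkeeping: making sure the chain rule signs are handled correctly for $f_\lat(\vec{x}-\vec{y})$ (the first-order terms must cancel in the product, which is what produces the $-\grad f_\lat(\vec{x})(\grad f_\lat(\vec{x}))^T$ contribution) and justifying that $f_\lat$ is smooth enough to differentiate twice — this follows because $\rho_s(\lat + \vec{x})$ is a uniformly convergent sum of smooth functions with uniformly convergent derivatives, so term-by-term differentiation is valid. Finally I would note that the fact $\vec0$ is a \emph{minimum} of $g$ only gives $H_{\vec{y}} g(\vec0) \succeq 0$ directly; no second-order argument beyond the standard second-derivative test for multivariate functions is needed, since $g \ge 0 = g(\vec0)$ everywhere forces the Hessian at the minimizer to be positive semidefinite.
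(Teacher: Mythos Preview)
Your proposal is correct and follows essentially the same approach as the paper: both argue that $g(\vec{y}) = f_\lat(\vec{x}+\vec{y})f_\lat(\vec{x}-\vec{y}) - f_\lat(\vec{x})^2 f_\lat(\vec{y})^2$ attains its minimum $0$ at $\vec{y}=\vec0$, so its Hessian there is positive semidefinite, and then compute that Hessian via the product rule~\eqref{eq:Hess_prod}. In fact you supply the explicit Hessian computation that the paper leaves to the reader, and your bookkeeping (signs from the chain rule, $\grad f_\lat(\vec0)=\vec0$, $f_\lat(\vec0)=1$) is all correct.
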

\begin{proof}
By Eq.~\eqref{eq:periodicform}, we have
\[
f_\lat(\vec{x} + \vec{y})f_\lat(\vec{x} - \vec{y})-f_\lat(\vec{x})^2f_\lat(\vec{y})^2 \geq 0
\; .
\]
Note that we have equality when $\vec{y} = \vec0$. It follows that, for any $\vec{x}$, the left-hand side has a local minimum at $\vec{y} = \vec0$, and therefore the Hessian with respect to $\vec{y}$ at $\vec0$ must be positive semidefinite. The result follows by 
using Eq.~\eqref{eq:Hess_prod} to take the Hessian and rearranging.
\end{proof}

As a corollary, we obtain that the covariance matrix of $D_{\lat + \vec{x}}$ is minimized 
at $\vec{x}=\vec{0}$. (Notice that the expectation of the centered Gaussian $D_{\lat}$ is zero because the lattice is symmetric.) The corollary follows immediately from Proposition~\ref{prop:Hess} and the following two identities:
\begin{align}
\frac{\grad f_\lat(\vec{x})}{f_\lat(\vec{x})} &= 
\frac{\grad \rho(\lat+\vec{x})}{\rho(\lat+\vec{x})} = 
-2\pi \expect_{\vec{w} \sim D_{\lat + \vec{x}}}[\vec{w}] \; , \text{ and}  \label{eq:gradf}
\\
\frac{H f_\lat(\vec{x})}{f_\lat(\vec{x})} &= 
\frac{H \rho(\lat+\vec{x})}{\rho(\lat+\vec{x})} = 
4\pi^2 \expect_{\vec{w} \sim D_{\lat + \vec{x}}}[\vec{w}\vec{w}^T] - 2\pi I_n \label{eq:Hessf}
\; .
\end{align}

\begin{corollary}
\label{cor:secondmoment}
For any lattice $\lat \subset \R^n$ and vector $\vec{x} \in \R^n$, we have the positive semidefinite inequality 
\[
\expect_{\vec{w} \sim D_{\lat + \vec{x}}}[\vec{w}\vec{w}^T] - \expect_{\vec{w} \sim D_{\lat + \vec{x}}}[\vec{w}]\expect_{\vec{w} \sim D_{\lat + \vec{x}}}[\vec{w}^T]
\succeq 
\expect_{\vec{w} \sim D_{\lat}}[\vec{w}\vec{w}^T] 
\; .
\]
In particular,
\[
\expect_{\vec{w} \sim D_{\lat + \vec{x}}}[\length{\vec{w}}^2]  - \Big\|\expect_{\vec{w} \sim D_{\lat + \vec{x}}}[\vec{w}]\Big\|^2 \geq \expect_{\vec{w} \sim D_\lat}[\length{\vec{w}}^2] 
\; .
\]
\end{corollary}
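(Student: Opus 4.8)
The plan is to obtain the corollary directly from Proposition~\ref{prop:Hess} by translating the gradient and Hessian of $f_\lat$ into moments of the discrete Gaussian, i.e.\ by establishing and then invoking Eqs.~\eqref{eq:gradf} and~\eqref{eq:Hessf}. First I would prove those two identities. Writing $\rho(\lat+\vec{x})=\sum_{\vec{v}\in\lat}\exp(-\pi\length{\vec{v}+\vec{x}}^2)$ and differentiating term by term (legitimate because the Gaussian sum and all of its derivatives converge uniformly on compact sets), one gets $\grad\rho(\lat+\vec{x})=-2\pi\sum_{\vec{v}\in\lat}(\vec{v}+\vec{x})\exp(-\pi\length{\vec{v}+\vec{x}}^2)$; dividing by $\rho(\lat+\vec{x})$ turns the sum into an expectation over $D_{\lat+\vec{x}}$, which is Eq.~\eqref{eq:gradf}. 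A second differentiation produces the terms $4\pi^2(\vec{v}+\vec{x})(\vec{v}+\vec{x})^T-2\pi I_n$ inside the sum, giving Eq.~\eqref{eq:Hessf}. Since $f_\lat(\vec{x})=\rho(\lat+\vec{x})/\rho(\lat)$ differs from $\rho(\lat+\vec{x})$ only by the constant $\rho(\lat)$, the normalized quantities $\grad f_\lat(\vec{x})/f_\lat(\vec{x})$ and $Hf_\lat(\vec{x})/f_\lat(\vec{x})$ agree with $\grad\rho(\lat+\vec{x})/\rho(\lat+\vec{x})$ and $H\rho(\lat+\vec{x})/\rho(\lat+\vec{x})$, as asserted in the displayed identities preceding the corollary.

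Next I would substitute these identities into the positive semidefinite inequality of Proposition~\ref{prop:Hess}. Using $f_\lat(\vec0)=1$, the term $Hf_\lat(\vec0)$ equals $4\pi^2\expect_{\vec{w}\sim D_{\lat}}[\vec{w}\vec{w}^T]-2\pi I_n$; the left-hand side $Hf_\lat(\vec{x})/f_\lat(\vec{x})$ equals $4\pi^2\expect_{\vec{w}\sim D_{\lat+\vec{x}}}[\vec{w}\vec{w}^T]-2\pi I_n$; and the rank-one term $\grad f_\lat(\vec{x})(\grad f_\lat(\vec{x}))^T/f_\lat(\vec{x})^2$ equals $4\pi^2\expect_{\vec{w}\sim D_{\lat+\vec{x}}}[\vec{w}]\expect_{\vec{w}\sim D_{\lat+\vec{x}}}[\vec{w}^T]$. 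Cancelling the common summand $-2\pi I_n$ from both sides and dividing the resulting PSD inequality by the positive scalar $4\pi^2$ yields precisely the first displayed inequality of the corollary.

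Finally, the scalar ("in particular") statement follows by taking traces: for Hermitian matrices, $A\succeq B$ implies $\Tr A\ge\Tr B$, and here $\Tr\expect_{\vec{w}\sim D_{\lat+\vec{x}}}[\vec{w}\vec{w}^T]=\expect_{\vec{w}\sim D_{\lat+\vec{x}}}[\length{\vec{w}}^2]$, $\Tr\big(\expect_{\vec{w}\sim D_{\lat+\vec{x}}}[\vec{w}]\expect_{\vec{w}\sim D_{\lat+\vec{x}}}[\vec{w}^T]\big)=\big\|\expect_{\vec{w}\sim D_{\lat+\vec{x}}}[\vec{w}]\big\|^2$, and $\Tr\expect_{\vec{w}\sim D_{\lat}}[\vec{w}\vec{w}^T]=\expect_{\vec{w}\sim D_{\lat}}[\length{\vec{w}}^2]$.

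I do not expect a genuine obstacle: all the substance is already contained in Proposition~\ref{prop:Hess}, and what remains is bookkeeping. The only points that warrant a little care are (i) justifying differentiation under the lattice sum, which is immediate from the rapid decay of $\rho$, and (ii) keeping the orientation of the PSD inequality straight through the cancellation of $-2\pi I_n$ and the division by $4\pi^2$, so that the final rearrangement comes out as $\expect_{\vec{w}\sim D_{\lat+\vec{x}}}[\vec{w}\vec{w}^T]-\expect_{\vec{w}\sim D_{\lat+\vec{x}}}[\vec{w}]\expect_{\vec{w}\sim D_{\lat+\vec{x}}}[\vec{w}^T]\succeq\expect_{\vec{w}\sim D_{\lat}}[\vec{w}\vec{w}^T]$ rather than its reverse.
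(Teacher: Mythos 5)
Your proposal is correct and is exactly the paper's argument: the paper also derives the corollary by combining Proposition~\ref{prop:Hess} with the identities~\eqref{eq:gradf} and~\eqref{eq:Hessf} (using $f_\lat(\vec0)=1$), and you have merely written out the routine substitution, cancellation, and trace steps that the paper leaves implicit.
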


The following proposition (with $\vec u = \vec v$) implies that the one-dimensional projections 
of the discrete Gaussian distribution are ``leptokurtic,'' i.e., have 
kurtosis at least $3$, the kurtosis of a normal variable. 
We remark that the case $n=1$ follows from a known inequality 
related to the Riemann zeta function~\cite{Chung76,Newman76} (see also~\cite[Section 2.2]{BianePY01}). 

\begin{proposition}
\label{prop:fourthmoment}
For any lattice $\lat \subset \R^n$ and vectors $\vec{u},\vec{v} \in \R^n$,
\[
\expect_{\vec{y} \sim D_{\lat}}[\inner{\vec{y},\vec{u}}^2\inner{\vec{y},\vec{v}}^2]  \geq \expect_{\vec{y} \sim D_{\lat}}[\inner{\vec{y}, \vec{u}}^2 ]\expect_{\vec{y} \sim D_{\lat}}[\inner{\vec{y}, \vec{v}}^2 ] + 2\expect_{\vec{y} \sim D_{\lat}}[\inner{\vec{y},\vec{u}}\inner{\vec{y},\vec{v}}]^2 
\; .
\]
\end{proposition}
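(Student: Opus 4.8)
The plan is to mimic the proof of Proposition~\ref{prop:Hess}, but now taking fourth-order derivatives of the main inequality rather than second-order ones. Concretely, I would start from Eq.~\eqref{eq:periodicform} in the form
\[
F(\vec{x}, \vec{y}) := f_\lat(\vec{x}+\vec{y}) f_\lat(\vec{x}-\vec{y}) - f_\lat(\vec{x})^2 f_\lat(\vec{y})^2 \geq 0
\; ,
\]
which vanishes at $\vec{x} = \vec{y} = \vec0$. Since $F \geq 0$ everywhere and $F(\vec0,\vec0) = 0$, the point $(\vec0,\vec0)$ is a global minimum, so \emph{all} first derivatives vanish there and the Hessian in the $2n$ variables $(\vec{x},\vec{y})$ is positive semidefinite at the origin. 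The first-derivative information is degenerate here (it will turn out the relevant second-order term also vanishes, since $f_\lat$ is even and its gradient vanishes at $\vec0$), so I expect I will actually need to push to the fourth-order Taylor expansion of $F$ along a line $(\vec{x},\vec{y}) = (t\vec u, t\vec v)$ and conclude that the leading (fourth-order in $t$) coefficient is nonnegative.

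The key steps, in order: (1) Introduce the directional function $g(t) := F(t\vec u, t\vec v)$ and observe $g(t) \geq 0$, $g(0) = 0$. (2) Compute $g'(0), g''(0), g'''(0)$ and verify they all vanish — using that $f_\lat$ is even, that $\grad f_\lat(\vec0) = \vec0$ (from Eq.~\eqref{eq:gradf}, since $D_\lat$ is centered), and the product/chain rules; the odd-order derivatives vanish by the evenness/symmetry of $F$ under $(\vec x,\vec y) \mapsto (-\vec x,-\vec y)$ and the third-order one by a short direct check. (3) Conclude $g^{(4)}(0) \geq 0$. (4) Expand $g^{(4)}(0)$ explicitly in terms of the derivatives of $f_\lat$ at $\vec0$, keeping track of which terms survive, and translate these into moments of $D_\lat$ via Eq.~\eqref{eq:gradf}, Eq.~\eqref{eq:Hessf}, and their higher-order analogues: for a linear functional $\ell(\vec w) = \inner{\vec w, \vec a}$, the quantities $\partial_t^2 f_\lat(t\vec a)/f_\lat|_{t=0}$ and $\partial_t^4 f_\lat(t\vec a)/f_\lat|_{t=0}$ are affine combinations of $\expect[\ell(\vec y)^2]$ and $\expect[\ell(\vec y)^4]$ respectively (the Gaussian-weight cross terms $-2\pi\|\cdot\|^2$ contribute the "subtracted" Gaussian moments). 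After substituting and simplifying, the inequality $g^{(4)}(0) \geq 0$ should reduce exactly to the claimed bound, with the $+2\expect[\inner{\vec y,\vec u}\inner{\vec y,\vec v}]^2$ term arising from the mixed second-derivative terms in the Hessian-product expansion of $f_\lat(\vec x)^2 f_\lat(\vec y)^2$.

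I expect the main obstacle to be purely bookkeeping: correctly expanding $g^{(4)}(0)$. Differentiating $f_\lat(\vec x + \vec y) f_\lat(\vec x - \vec y)$ four times and $f_\lat(\vec x)^2 f_\lat(\vec y)^2$ four times, then setting $\vec x = \vec y = \vec 0$, generates many terms, but most die because $f_\lat(\vec 0) = 1$ and $\grad f_\lat(\vec 0) = \vec 0$, so only terms built from the second and fourth derivatives of $f_\lat$ at $\vec 0$ survive. The surviving combination must then be matched against moments; the one subtlety is that the "Gaussian" pieces $-2\pi I_n$ in Eq.~\eqref{eq:Hessf} (and its fourth-order counterpart) must cancel between the two sides of $g^{(4)}(0) \geq 0$, leaving only the discrete-Gaussian moments and confirming that the inequality is the leptokurtic one rather than a shifted version. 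Once the derivatives are organized — for instance by working with the generating identity $\partial_t f_\lat(t\vec a) = -2\pi \expect_{\vec w \sim D_{\lat + t\vec a}}[\inner{\vec w, \vec a}] \cdot f_\lat(t\vec a)$ and differentiating that — the computation is mechanical and the result falls out.
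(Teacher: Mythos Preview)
Your proposal is correct: expanding $g(t)=F(t\vec u,t\vec v)$ to fourth order at $t=0$ gives exactly
\[
g^{(4)}(0)=192\pi^4\Big(\expect[\inner{\vec y,\vec u}^2\inner{\vec y,\vec v}^2]-\expect[\inner{\vec y,\vec u}^2]\expect[\inner{\vec y,\vec v}^2]-2\expect[\inner{\vec y,\vec u}\inner{\vec y,\vec v}]^2\Big)\ge 0,
\]
with all the $\pi^2$ and $\pi^3$ terms (coming from the $-2\pi I_n$ pieces) cancelling as you anticipated; the evenness of $g$ already kills the first and third derivatives, and a short quadratic-form identity ($\|\vec a\|_M^2+\|\vec b\|_M^2=2\|\vec u\|_M^2+2\|\vec v\|_M^2$ for $\vec a=\vec u+\vec v$, $\vec b=\vec u-\vec v$) gives $g''(0)=0$.

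The paper reaches the same destination by a slightly different organization. Instead of differentiating $F$ four times in one shot, it first differentiates twice in $\vec y$ at $\vec y=\vec 0$ (this is Proposition~\ref{prop:Hess}/Corollary~\ref{cor:secondmoment}), obtaining a nonnegative function of $\vec x$ that vanishes at $\vec 0$, and then takes the Hessian of \emph{that} in $\vec x$ at $\vec 0$. So it is ``second derivative of a second derivative'' rather than a single fourth derivative along the diagonal. The payoff of the paper's two-step route is that each step is a short Hessian computation and the final output is a positive semidefinite \emph{matrix} inequality (from which the scalar statement for each $\vec v$ follows by contraction), whereas your one-step route trades this for a single, heavier bookkeeping exercise that directly yields the scalar inequality for fixed $\vec u,\vec v$. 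Both are valid and morally the same argument.
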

\begin{proof}
	From Corollary~\ref{cor:secondmoment}, we have
	\[
	\expect_{\vec{w} \sim D_{\lat + \vec{x}}}[\inner{\vec{w}, \vec{u}}^2] - \expect_{\vec{w} \sim D_{\lat + \vec{x}}}[\inner{\vec{w}, \vec{u}}]^2 - \expect_{\vec{w} \sim D_{\lat}}[\inner{\vec{w}, \vec{u}}^2] \geq 0
		\; .
	\]
	Note that the left-hand side equals zero when $\vec x = \vec0$ since $\lat = - \lat$. 
	The same is true if we multiply through by $\rho(\lat + \vec{x})^2$, which leads to the inequality
	\[
	\sum_{\vec{y}, \vec{y}' \in \lat} \rho(\vec{y} + \vec{x})\rho(\vec{y}' + \vec{x}) \cdot \Big( \inner{\vec{y} - \vec{y}', \vec{u}}^2/2 - \expect_{\vec{w} \sim D_{\lat}}[\inner{\vec{w}, \vec{u}}^2] \Big)  \geq 0
	\; .
	\]
	(To see that, use $\inner{\vec{y} - \vec{y}', \vec{u}} = \inner{\vec{y} +\vec{x}, \vec{u}} - \inner{\vec{y}'+\vec{x}, \vec{u}}$, and expand the square.) Therefore, as in the proof of Proposition~\ref{prop:Hess}, the Hessian of the left-hand side with respect to $\vec{x}$ at $\vec{x} = \vec0$ must be positive semidefinite.
	Using Eqs.~\eqref{eq:Hess_prod},~\eqref{eq:gradf}, and~\eqref{eq:Hessf}, we see that
	\begin{align*}
		H(\rho(\vec{y} + \vec{x}) \rho(\vec{y}'  +\vec{x})) |_{\vec{x} = \vec0} = 4\pi^2 \rho(\vec{y}) \rho(\vec{y}') \big( (\vec{y} + \vec{y}')(\vec{y} + \vec{y}')^T - I_n/\pi \big)
		\; .
	\end{align*}
	Therefore,
	\begin{align*}
		0 
		&\preceq \expect_{\vec{y}, \vec{y}' \sim D_{\lat}} \Big[  \big( (\vec{y} + \vec{y}')(\vec{y} + \vec{y}')^T  - I_n/\pi \big) \cdot \Big( \inner{\vec{y} - \vec{y}', \vec{u}}^2/2 - \expect_{\vec{w} \sim D_{\lat}}[\inner{\vec{w}, \vec{u}}^2] \Big) \Big] \\
		&= \expect_{\vec{y}, \vec{y}' \sim D_{\lat}} \Big[  (\vec{y} + \vec{y}')(\vec{y} + \vec{y}')^T  \cdot \Big( \inner{\vec{y} - \vec{y}', \vec{u}}^2/2 - \expect_{\vec{w} \sim D_{\lat}}[\inner{\vec{w}, \vec{u}}^2] \Big) \Big] \\
		&= \expect_{\vec{y}, \vec{y}' \sim D_{\lat}} \Big[ \vec{y}\vec{y}^T\inner{\vec{y}, \vec{u}}^2 + \vec{y} \vec{y}^T \inner{\vec{y}', \vec{u}}^2 - (\vec{y} \vec{y}^{\prime T} + \vec{y}' \vec{y}^T)\inner{\vec{y}, \vec{u}}\inner{\vec{y}', \vec{u}} - 2 \vec{y} \vec{y}^T \expect_{\vec{w} \sim D_{\lat}}[\inner{\vec{w}, \vec{u}}^2] \Big]\\
		&= \expect_{\vec{y} \sim D_{\lat}}[\vec{y}\vec{y}^T\inner{\vec{y}, \vec{u}}^2] - \expect_{\vec{y} \sim D_{\lat}}[\vec{y} \vec{y}^T ]\expect_{\vec{y} \sim D_{\lat}}[\inner{\vec{y}, \vec{u}}^2] - 2\expect_{\vec{y} \sim D_{\lat}}[\vec{y} \inner{\vec{y}, \vec{u}}] \expect_{\vec{y} \sim D_{\lat}}[\vec{y}^T\inner{\vec{y}, \vec{u}}]
		\; ,
	\end{align*}
	as needed.
\end{proof}

\section{Monotonicity of the periodic Gaussian function}
\label{sec:monotone}

The next proposition shows that $f_{\lat, s}(\vec{x})$ is non-decreasing as a function of $s$.
This (and the more general statement in Proposition~\ref{prop:monocovariance}) answers 
a question of Price~\cite{PriceMO1}, who proved it for the one-dimensional case $n=1$ 
(illustrated in Figure~\ref{fig:monoton}). 

One might wonder if such a monotonicity property is specific to flat tori or whether
it is a special case of a more general phenomenon. 
Namely, Peres~\cite{PeresPrivate} asked whether for any vertex transitive graph $G$ it holds that for any 
two vertices $u,v$, the ratio $\Pr[X_t = v]/\Pr[X_t = u]$ is non-decreasing 
as a function of $t$, 
where $X_t$ is a continuous-time random walk on $G$ starting at $u$ after time $t$.
Recently, using our result, Price showed how to prove this for Abelian Cayley graphs~\cite{Price16}.
Interestingly, a further extension to arbitrary Cayley graphs turns out to be false~\cite{RegevShinkar}.

\begin{figure}[ht]
\begin{center}
\includegraphics[width= 0.7\textwidth]{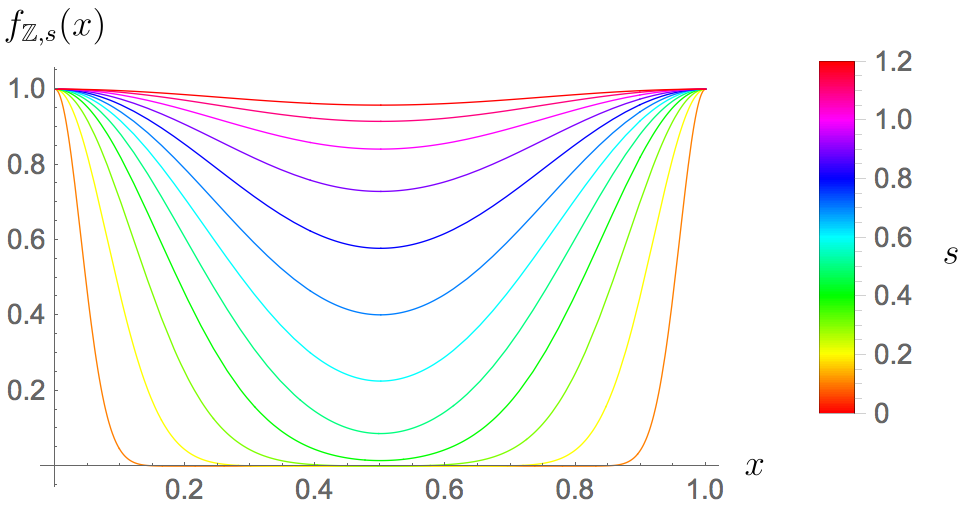} 
\caption{\label{fig:monoton} 
$f_{\Z,s}(x)$ for various values of $s$ and $x \in [0,1]$.}
\end{center}
\end{figure}

\begin{proposition}
\label{prop:monotones}
For any lattice $\lat \subset \R^n$ and vector $\vec{x} \in \R^n$, 
\[
\frac{\frac{d}{ds} f_{\lat, s}(\vec{x})}{f_{\lat, s}(\vec{x})}\geq  
\frac{s}{2\pi} \cdot \frac{\length{\grad f_{\lat, s}(\vec{x})}^2}{f_{\lat, s}(\vec{x})^2} 
\; .
\]
\end{proposition}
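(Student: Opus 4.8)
The plan is to reduce the claimed differential inequality to the pointwise Hessian bound of Proposition~\ref{prop:Hess}, which is already available, by exploiting the fact that the unnormalized Gaussian $\rho_s(\lat+\vec x)$ solves a heat equation in the joint variables $(s,\vec x)$.

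First I would record this heat equation. Differentiating $\rho_s(\vec w+\vec x)=\exp(-\pi\length{\vec w+\vec x}^2/s^2)$ term by term (justified since the sum and all its derivatives converge locally uniformly), one checks the pointwise identity $\frac{s}{2\pi}\Delta_{\vec x}\rho_s(\vec w+\vec x)=\frac{\partial}{\partial s}\rho_s(\vec w+\vec x)-\frac{n}{s}\rho_s(\vec w+\vec x)$; summing over $\vec w\in\lat$ gives, with $g_s(\vec x):=\rho_s(\lat+\vec x)$,
\[
\frac{\partial g_s}{\partial s}(\vec x)=\frac{s}{2\pi}\Delta_{\vec x}g_s(\vec x)+\frac{n}{s}g_s(\vec x)\;.
\]
Since $f_{\lat,s}(\vec x)=g_s(\vec x)/g_s(\vec 0)$, differentiating the quotient and observing that the two $\frac{n}{s}$ contributions cancel, I obtain
\[
\frac{\frac{d}{ds}f_{\lat,s}(\vec x)}{f_{\lat,s}(\vec x)}=\frac{s}{2\pi}\Big(\frac{\Delta_{\vec x}f_{\lat,s}(\vec x)}{f_{\lat,s}(\vec x)}-\Delta_{\vec x}f_{\lat,s}(\vec 0)\Big)=\frac{s}{2\pi}\,\Tr\!\Big(\frac{Hf_{\lat,s}(\vec x)}{f_{\lat,s}(\vec x)}-Hf_{\lat,s}(\vec 0)\Big)\;.
\]

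Second, I would invoke the $s$-scaled form of Proposition~\ref{prop:Hess}. Since $f_{\lat,s}(\vec x)=f_{s^{-1}\lat}(\vec x/s)$, applying Proposition~\ref{prop:Hess} to the lattice $s^{-1}\lat$ at the point $\vec x/s$ and multiplying the resulting positive semidefinite inequality by $s^{-2}$ yields
\[
\frac{Hf_{\lat,s}(\vec x)}{f_{\lat,s}(\vec x)}\succeq Hf_{\lat,s}(\vec 0)+\frac{\grad f_{\lat,s}(\vec x)(\grad f_{\lat,s}(\vec x))^{T}}{f_{\lat,s}(\vec x)^2}\;.
\]
(Equivalently, one can re-run the proof of Proposition~\ref{prop:Hess} verbatim using Eq.~\eqref{eq:periodicform} for $f_{\lat,s}$, which itself follows from Corollary~\ref{cor:main_theorem} applied to $s^{-1}\lat$.) Taking the trace of this inequality—trace is monotone with respect to the positive semidefinite order, since $A\succeq B$ forces $\Tr(A-B)\ge0$—and combining with the identity from the previous step gives
\[
\frac{\frac{d}{ds}f_{\lat,s}(\vec x)}{f_{\lat,s}(\vec x)}\geq\frac{s}{2\pi}\cdot\frac{\length{\grad f_{\lat,s}(\vec x)}^2}{f_{\lat,s}(\vec x)^2}\;,
\]
using $\Tr(\grad f(\grad f)^{T})=\length{\grad f}^2$ and $s/(2\pi)>0$.

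The genuinely substantive input is Proposition~\ref{prop:Hess}, hence the main inequality (Theorem~\ref{thm:awesometheorem}); everything else is bookkeeping. The only step requiring care is the derivation of the heat equation together with the justification of term-by-term differentiation, and in particular checking that the $n/s$ term vanishes exactly upon normalization—this cancellation is what produces the clean constant $s/(2\pi)$ and is the one place an error could slip in.
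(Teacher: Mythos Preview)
Your proof is correct and is essentially the same argument as the paper's, only packaged differently. The paper computes $\frac{d}{ds}f_{\lat,s}(\vec x)$ directly as $\frac{2\pi}{s^3}f_{\lat,s}(\vec x)\big(\expect_{D_{\lat+\vec x,s}}[\|\vec w\|^2]-\expect_{D_{\lat,s}}[\|\vec w\|^2]\big)$ and then applies Corollary~\ref{cor:secondmoment}; your heat-equation identity is the same computation rewritten via Eq.~\eqref{eq:Hessf}, and your use of Proposition~\ref{prop:Hess} followed by taking the trace is exactly Corollary~\ref{cor:secondmoment} (which is Proposition~\ref{prop:Hess} translated through Eqs.~\eqref{eq:gradf}--\eqref{eq:Hessf}). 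So the two proofs differ only in whether one phrases the intermediate quantities as Laplacians/Hessians or as moments of $D_{\lat+\vec x,s}$.
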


\begin{proof}
A straightforward computation shows that 
\begin{align*}
\frac{d}{ds} f_{\lat, s}(\vec{x}) &= \frac{2\pi f_{\lat, s}(\vec{x})}{s^3} \cdot  \expect_{\vec{w} \sim D_{\lat + \vec{x}, s}}[\length{\vec{w}}^2] - \frac{2\pi f_{\lat, s}(\vec{x})}{s^3} \cdot \expect_{\vec{w} \sim D_{\lat, s}}[\length{\vec{w}}^2] \\
&\geq \frac{2\pi f_{\lat, s}(\vec{x})}{s^3} \length[\Big]{\expect_{\vec{w} \sim D_{\lat + \vec{x}, s}}[\vec{w}]}^2
\; ,
\end{align*}
where we have applied Corollary~\ref{cor:secondmoment}. The result then follows from the fact that (see~Eq.\eqref{eq:gradf})
\[ 
\frac{\grad f_{\lat, s}(\vec{x})}{f_{\lat, s}(\vec{x})} = 
-\frac{2\pi}{s^2} \cdot\expect_{\vec{w} \sim D_{\lat + \vec{x},s}}[\vec{w}] 
\; . \qedhere
\]
\end{proof}

We now extend this monotonicity result by replacing the scalar variance parameter $s^2$ by a positive-definite matrix $\Sigma$. In particular, we define 
	\[
	f_{\lat, \Sigma}(\vec{x}) = \frac{\sum_{\vec{y} \in \lat} \exp(-\pi (\vec{y} + \vec{x})^T \Sigma^{-1} (\vec{y} + \vec{x}))}{\sum_{\vec{y} \in \lat} \exp(-\pi \vec{y}^T \Sigma^{-1} \vec{y})}
	\; .
	\]
	Equivalently, 
	\[
	f_{\lat, \Sigma}(\vec{x}) = f_{\Sigma^{-1/2} \lat}(\Sigma^{-1/2}\vec{x})
	\; ,
	\]
	where $\Sigma^{1/2}$ is the unique positive-definite square root of $\Sigma$.

\begin{proposition}
	\label{prop:monocovariance}
	For any lattice $\lat \subset \R^n$, $\vec{x} \in \R^n$, and positive-definite matrices $\Sigma, \Sigma' \in \R^{n \times n}$ satisfying the positive semidefinite inequality 
	$\Sigma' \preceq \Sigma$,
	\[
	f_{ \lat, \Sigma'}( \vec{x}) \leq f_{ \lat, \Sigma}(\vec{x})
	\; .
	\]
\end{proposition}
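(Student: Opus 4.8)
The plan is to reduce the matrix monotonicity statement to the scalar case (Proposition~\ref{prop:monotones}) by a clever one-parameter interpolation between $\Sigma'$ and $\Sigma$, together with the change-of-variables identity $f_{\lat, \Sigma}(\vec{x}) = f_{\Sigma^{-1/2}\lat}(\Sigma^{-1/2}\vec{x})$ already recorded before the proposition. The natural first move is to normalize: writing $M := \Sigma^{-1/2} \Sigma' \Sigma^{-1/2}$, the hypothesis $\Sigma' \preceq \Sigma$ becomes $\vec0 \prec M \preceq I_n$, and after substituting $\lat \mapsto \Sigma^{-1/2}\lat$ and $\vec{x} \mapsto \Sigma^{-1/2}\vec{x}$ it suffices to prove $f_{\lat, M}(\vec{x}) \leq f_{\lat}(\vec{x})$ whenever $\vec0 \prec M \preceq I_n$. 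Diagonalizing $M = O \diag(\lambda_1,\dots,\lambda_n) O^T$ with an orthogonal $O$ and absorbing $O$ into the lattice (since $\rho$ is rotation-invariant), we are left to show $f_{\lat, D}(\vec{x}) \le f_{\lat}(\vec{x})$ for diagonal $D = \diag(\lambda_1,\dots,\lambda_n)$ with all $\lambda_i \le 1$.

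The key step is then a \emph{coordinate-wise} argument: I would interpolate one eigenvalue at a time. Fix all but the $i$-th coordinate and consider the function $t \mapsto f_{\lat, D_t}(\vec{x})$ where $D_t$ agrees with $D$ except in the $i$-th slot, which is set to $t \in [\lambda_i, 1]$. It would be cleanest, however, to instead reprove the differential inequality of Proposition~\ref{prop:monotones} directly in the anisotropic setting, i.e.\ to show that the directional derivative of $f_{\lat,\Sigma}(\vec{x})$ as $\Sigma$ increases along any positive-semidefinite direction $\Delta \succeq 0$ is nonnegative. Concretely, for a one-parameter family $\Sigma_t$ with $\frac{d}{dt}\Sigma_t = \Delta \succeq 0$, a computation analogous to the one in Proposition~\ref{prop:monotones} should give
\[
\frac{d}{dt} \log f_{\lat, \Sigma_t}(\vec{x}) = \pi\,\Tr\!\Big(\Sigma_t^{-1}\Delta\,\Sigma_t^{-1}\big(\expect_{\vec{w} \sim D_{\lat+\vec{x},\Sigma_t}}[\vec{w}\vec{w}^T] - \expect_{\vec{w} \sim D_{\lat,\Sigma_t}}[\vec{w}\vec{w}^T]\big)\Big)\,,
\]
where $D_{\lat+\vec{x},\Sigma}$ is the discrete Gaussian with covariance-like parameter $\Sigma$. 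By Corollary~\ref{cor:secondmoment} (applied to the lattice $\Sigma_t^{-1/2}\lat$ and vector $\Sigma_t^{-1/2}\vec{x}$, then transported back), the bracketed matrix is lower-bounded in the Loewner order by the rank-one psd matrix $\expect[\vec{w}]\expect[\vec{w}]^T \succeq 0$. Since $\Sigma_t^{-1}\Delta\,\Sigma_t^{-1} \succeq 0$ as well, and the trace of a product of two psd matrices is nonnegative, the derivative is $\ge 0$. Integrating $t$ from $0$ to $1$ along the straight line $\Sigma_t = \Sigma' + t(\Sigma - \Sigma')$ (which keeps $\Sigma_t$ positive-definite and has constant derivative $\Delta = \Sigma - \Sigma' \succeq 0$) yields $f_{\lat,\Sigma'}(\vec{x}) \le f_{\lat,\Sigma}(\vec{x})$.

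The main obstacle I anticipate is getting the derivative formula for $\frac{d}{dt} \log f_{\lat, \Sigma_t}(\vec{x})$ right: one must differentiate $\exp(-\pi(\vec{y}+\vec{x})^T \Sigma_t^{-1}(\vec{y}+\vec{x}))$ using $\frac{d}{dt}\Sigma_t^{-1} = -\Sigma_t^{-1}\Delta\,\Sigma_t^{-1}$, keep careful track of the normalizing denominator $\sum_{\vec{y}\in\lat}\exp(-\pi\vec{y}^T\Sigma_t^{-1}\vec{y})$ (which is what produces the $\expect_{D_{\lat,\Sigma_t}}[\vec{w}\vec{w}^T]$ subtraction), and confirm that the cross-terms organize into exactly the matrix difference appearing in Corollary~\ref{cor:secondmoment}. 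An equivalent and perhaps more transparent route avoids differentiation entirely: work at a point where $\Sigma_t$ is the identity by the change of variables $\vec{w} \mapsto \Sigma_t^{-1/2}\vec{w}$, so that $\frac{d}{dt}\log f$ at that point is literally $\pi \Tr(\tilde\Delta(\expect_{D_{\lat'+\vec{x}'}}[\vec{w}\vec{w}^T] - \expect_{D_{\lat'}}[\vec{w}\vec{w}^T]))$ with $\tilde\Delta = \Sigma_t^{-1/2}\Delta\Sigma_t^{-1/2} \succeq 0$, and then Corollary~\ref{cor:secondmoment} applies verbatim. Either way, once the derivative is expressed as a trace of a product of two psd matrices, the conclusion is immediate.
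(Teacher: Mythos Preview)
Your proposal is correct and is essentially the paper's proof: both normalize (the paper sets $\Sigma'=I_n$, you set $\Sigma=I_n$), diagonalize via an orthogonal change of basis, and then show the relevant derivative is nonnegative by invoking Corollary~\ref{cor:secondmoment}. The only cosmetic difference is that the paper differentiates coordinate-wise in the diagonal entries $s_i$ (exactly the ``one eigenvalue at a time'' interpolation you first described), whereas your preferred variant differentiates along a general psd direction and packages the conclusion as $\Tr(AB)\ge 0$ for $A,B\succeq 0$; these are the same argument.
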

\begin{proof}
	 We may replace $\lat$ by $\Sigma^{\prime -1/2}\lat$, $\vec{x}$ by $\Sigma^{\prime -1/2} \vec{x}$, 
	and $\Sigma$ by $\Sigma^{\prime -1/2} \Sigma \Sigma^{\prime -1/2} $ so that we can assume without loss of generality that $\Sigma' = I_n$. Moreover, by a change of basis, we may take $\Sigma$ to be diagonal. (Here, we have used the fact that the Gaussian is invariant under orthogonal transformations.)
	
	So, it suffices to show that $f_{\lat}(\vec{x}) \leq f_{\lat, \Sigma}(\vec{x})$ when $\Sigma \in \R^{n\times n}$ is a diagonal matrix with $\Sigma \succeq I_n$. Let $s_1^2, \ldots, s_n^2 \ge 1$ be the entries along the diagonal of $\Sigma$. The proof now proceeds nearly identically to the proof of Proposition~\ref{prop:monotones}. Differentiating with respect to $s_i$, we have
	\[
	\frac{{\rm d} }{{\rm d} s_i} f_{\lat, \Sigma}(\vec{x}) = 
	\frac{2\pi f_{ \lat, \Sigma }( \vec{x})}{s_i^3} 
	\Big(
	\expect_{\vec{w} \sim D_{\Sigma^{-1/2}(\lat + \vec{x})}}[w_i^2] - 
	\expect_{\vec{w} \sim D_{\Sigma^{-1/2}\lat }}[w_i^2]
	\Big)
	\; ,
	\]
	where $w_i$ is the $i$th coordinate of $\vec{w}$. The result follows by noting that Corollary~\ref{cor:secondmoment} implies that this derivative is positive for all $s_i > 0$, so that $f_{\lat, \Sigma}(\vec{x})$ is an increasing function of $s_i$.
\end{proof}

We next give another monotonicity result, now with respect to taking sublattices. 

\begin{proposition}
\label{prop:monotonesublattice}
For any lattice $\lat \subset \R^n$, sublattice $\M \subseteq \lat$, and vector $\vec{x} \in \R^n$,
\[
f_{\M}(\vec{x}) \leq f_{\lat}(\vec{x}) \; .
\]
\end{proposition}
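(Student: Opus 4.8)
The plan is to reduce the claim to the additive form of the main inequality, Eq.~\eqref{eq:additiveform}, applied to the sublattice $\M$. Write $\lat$ as a disjoint union of cosets of $\M$, namely $\lat = \bigcup_{\vec{c} \in \lat/\M} (\M + \vec{c})$, so that $\rho(\lat + \vec{x}) = \sum_{\vec{c} \in \lat/\M} \rho(\M + \vec{c} + \vec{x})$ for every $\vec{x}$. Dividing through by $\rho(\M)$ gives $\sum_{\vec{c} \in \lat/\M} f_\M(\vec{x} + \vec{c}) = \rho(\lat + \vec{x})/\rho(\M)$, and in particular, taking $\vec{x} = \vec0$, $\sum_{\vec{c} \in \lat/\M} f_\M(\vec{c}) = \rho(\lat)/\rho(\M)$. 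Since all the $\rho$ quantities in sight are positive and finite, the desired inequality $f_\M(\vec{x}) \le f_\lat(\vec{x}) = \rho(\lat+\vec{x})/\rho(\lat)$ is equivalent, after clearing denominators, to
\[
f_\M(\vec{x}) \sum_{\vec{c} \in \lat/\M} f_\M(\vec{c}) \;\le\; \sum_{\vec{c} \in \lat/\M} f_\M(\vec{x} + \vec{c})
\; .
\]

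To prove this, I would apply Eq.~\eqref{eq:additiveform} to the lattice $\M$ with the pair of vectors $\vec{x}$ and $\vec{c}$, obtaining for each coset representative $\vec{c}$ the bound $f_\M(\vec{x}) f_\M(\vec{c}) \le \tfrac12\big( f_\M(\vec{x} + \vec{c}) + f_\M(\vec{x} - \vec{c}) \big)$. Summing over $\vec{c} \in \lat/\M$ (all terms are nonnegative, so this is legitimate), the left-hand side becomes $f_\M(\vec{x}) \sum_{\vec{c}} f_\M(\vec{c})$. For the right-hand side I would use that $f_\M$ is $\M$-periodic together with the fact that $\{-\vec{c} : \vec{c} \in \lat/\M\}$ is again a complete set of coset representatives of $\lat/\M$ (because $\M = -\M$), which yields $\sum_{\vec{c}} f_\M(\vec{x} - \vec{c}) = \sum_{\vec{c}} f_\M(\vec{x} + \vec{c})$. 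Hence the right-hand side equals $\sum_{\vec{c}} f_\M(\vec{x} + \vec{c})$, and the displayed inequality follows, completing the proof.

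I do not expect a serious obstacle; the argument is essentially bookkeeping once one notices that \eqref{eq:additiveform} is the right tool. The two points that need a bit of care are: (i) the reindexing symmetry $\vec{c} \leftrightarrow -\vec{c}$ on $\lat/\M$, which is exactly where $\M = -\M$ is used; and (ii) convergence in the case where $\M$ has smaller rank than $\lat$, so that $\lat/\M$ is infinite — but since every term appearing is nonnegative, all the termwise summations and rearrangements remain valid. (Alternatively, one can first treat the full-rank case and then observe that the identical computation goes through with absolutely convergent sums in general.)
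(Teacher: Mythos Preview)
Your proof is correct and is essentially the same as the paper's: both decompose $\lat$ into cosets of $\M$, apply Eq.~\eqref{eq:additiveform} to $\M$ with the pair $(\vec{x},\vec{c})$ termwise, and use the $\vec{c}\leftrightarrow-\vec{c}$ symmetry on $\lat/\M$ to collapse the right-hand side. The only difference is cosmetic---you phrase everything in terms of $f_\M$ whereas the paper works directly with $\rho$---and your remark on convergence for lower-rank $\M$ is a harmless extra.
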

\begin{proof}
\begin{align*}
\rho(\M + \vec{x}) \rho(\lat) &= \sum_{\vec{c} \in \lat/\M}\rho(\M + \vec{x}) \rho(\M + \vec{c}) \\
&\leq \sum_{\vec{c} \in \lat/\M} \rho(\M) (\rho(\M + \vec{x} + \vec{c}) + \rho(\M + \vec{x} - \vec{c}))/2 &\text{(Eq.~\eqref{eq:additiveform})}\\
&= \sum_{\vec{c} \in \lat/\M} \rho(\M) \rho(\M + \vec{x} + \vec{c}) \\
&= \rho(\M)\rho(\lat + \vec{x})
\; .
\end{align*}
The result follows.
\end{proof}

\section{Positive correlation of the Gaussian measure on lattices}
\label{sec:correlation}

The following shows that 
sublattices are positively correlated under the normalized Gaussian measure on a lattice. (Price asked whether this holds in the special case when $\NN := \lat \cap V$ for some subspace $V \subseteq \R^n$~\cite{PriceMO2}.)

\begin{theorem}
\label{thm:sublatticecorrelation}
For any lattice $\lat \subset \R^n$ and sublattices $\M,\NN \subseteq \lat$,
\[
\frac{\rho(\M)}{\rho(\lat)} \cdot \frac{\rho(\NN)}{\rho(\lat)}
 \leq \frac{\rho(\M \cap \NN)}{\rho(\lat)}
\; .
\]
\end{theorem}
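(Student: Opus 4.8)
The plan is to reduce the two-sublattice correlation statement to the single-sublattice monotonicity result of Proposition~\ref{prop:monotonesublattice} by quotienting out the intersection. Write $\mathcal{K} := \M \cap \NN$. The inequality we want is
\[
\rho(\M)\,\rho(\NN) \leq \rho(\mathcal{K})\,\rho(\lat)\; ,
\]
after clearing the common denominator $\rho(\lat)^2$. First I would rewrite both sides by decomposing the larger lattices into cosets of $\mathcal{K}$: $\rho(\M) = \sum_{\vec{c}\in \M/\mathcal{K}} \rho(\mathcal{K} + \vec{c})$ and $\rho(\NN) = \sum_{\vec{d}\in \NN/\mathcal{K}} \rho(\mathcal{K} + \vec{d})$. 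Multiplying, the left-hand side becomes $\sum_{\vec{c}, \vec{d}} \rho(\mathcal{K}+\vec{c})\rho(\mathcal{K}+\vec{d})$, a sum over pairs of coset representatives, one from $\M$ and one from $\NN$.

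The key structural fact is that because $\vec{c}\in\M$ and $\vec{d}\in\NN$ lie in complementary "directions" — more precisely, since $\mathcal{K} = \M\cap\NN$, the sums $\M + \NN$ inside $\lat$ have the property that distinct pairs $(\vec{c},\vec{d}) \in (\M/\mathcal{K}) \times (\NN/\mathcal{K})$ map to distinct cosets $\vec{c}+\vec{d}$ of $\mathcal{K}$ in $\M+\NN \subseteq \lat$. So I would apply Eq.~\eqref{eq:additiveform} of Corollary~\ref{cor:main_theorem} to each summand with the lattice taken to be $\mathcal{K}$ and $(\vec{x},\vec{y}) = (\vec{c},\vec{d})$:
\[
\rho(\mathcal{K}+\vec{c})\,\rho(\mathcal{K}+\vec{d}) \leq \rho(\mathcal{K})\cdot\big(\rho(\mathcal{K}+\vec{c}+\vec{d}) + \rho(\mathcal{K}+\vec{c}-\vec{d})\big)/2\; .
\]
Summing over all pairs, the right-hand side becomes $\rho(\mathcal{K})\sum_{\vec{c},\vec{d}}\rho(\mathcal{K}+\vec{c}+\vec{d})$ after noting that the $\vec{c}+\vec{d}$ and $\vec{c}-\vec{d}$ terms range over the same set (as $\vec{d}\mapsto-\vec{d}$ permutes $\NN/\mathcal{K}$). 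By the disjointness observation, $\sum_{\vec{c},\vec{d}} \rho(\mathcal{K}+\vec{c}+\vec{d}) = \rho(\M+\NN)$, and since $\M+\NN\subseteq\lat$ we get $\rho(\M+\NN)\leq\rho(\lat)$ by monotonicity under superlattices (which is exactly the content of Proposition~\ref{prop:monotonesublattice} applied with $\vec{x}=\vec0$, using $\rho(\M+\NN) \le \rho(\lat)$ since $f_{\M+\NN}(\vec0) = 1 \le f$ — more directly, $\rho$ is monotone nondecreasing when passing to a superlattice because every coset sum is positive). Chaining the inequalities yields $\rho(\M)\rho(\NN) \leq \rho(\mathcal{K})\rho(\lat)$, as desired.

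The main obstacle I anticipate is verifying the disjointness claim: that the map $(\vec{c},\vec{d}) \mapsto \vec{c}+\vec{d} \bmod \mathcal{K}$ is a bijection from $(\M/\mathcal{K})\times(\NN/\mathcal{K})$ onto $(\M+\NN)/\mathcal{K}$. Injectivity is the crux — if $\vec{c}_1+\vec{d}_1 \equiv \vec{c}_2+\vec{d}_2 \pmod{\mathcal{K}}$ then $\vec{c}_1 - \vec{c}_2 \equiv \vec{d}_2 - \vec{d}_1 \pmod{\mathcal{K}}$; the left side lies in $\M$ and the right in $\NN$, so the common value lies in $\M\cap\NN = \mathcal{K}$, forcing $\vec{c}_1\equiv\vec{c}_2$ and $\vec{d}_1\equiv\vec{d}_2$. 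This is clean, so the proof should go through without serious difficulty; the only care needed is to keep track of the $\pm\vec{d}$ symmetry when collapsing the average in Eq.~\eqref{eq:additiveform} back to a single sum.
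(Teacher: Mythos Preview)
Your proof is correct. The paper takes a slightly different, asymmetric route: it decomposes $\lat$ into cosets of $\NN$, uses the injection $\M/(\M\cap\NN) \hookrightarrow \lat/\NN$ to restrict the sum, and then applies Proposition~\ref{prop:monotonesublattice} (with sublattice $\M\cap\NN \subseteq \NN$) term by term to obtain $\rho(\lat)/\rho(\NN) \geq \rho(\M)/\rho(\M\cap\NN)$. Your approach instead works symmetrically in $\M$ and $\NN$, applying Eq.~\eqref{eq:additiveform} directly to the double coset sum and using the bijection $(\M/\mathcal{K})\times(\NN/\mathcal{K}) \cong (\M+\NN)/\mathcal{K}$. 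This bypasses Proposition~\ref{prop:monotonesublattice} (whose proof is itself just Eq.~\eqref{eq:additiveform} summed over cosets, so the underlying engine is the same) and along the way yields the slightly sharper intermediate inequality $\rho(\M)\,\rho(\NN) \leq \rho(\M\cap\NN)\,\rho(\M+\NN)$, which does not reference the ambient lattice $\lat$ at all. One minor remark: your parenthetical invoking Proposition~\ref{prop:monotonesublattice} at $\vec{x}=\vec0$ is confused (both sides equal $1$ there), but as you immediately note, the needed bound $\rho(\M+\NN)\leq\rho(\lat)$ is trivial since $\M+\NN\subseteq\lat$ and all summands are positive.
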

\begin{proof}
Note that the natural mapping from $\M/(\M \cap \NN)$ to $\lat/\NN$ given by $\vec{c} \mapsto \NN + \vec{c}$ is injective. 
So,
\begin{align*}
\frac{\rho(\lat)}{\rho(\NN)} &= \sum_{\vec{c} \in \lat/\NN} \frac{\rho( \NN + \vec{c})}{\rho(\NN)}\\
&\geq  \sum_{\vec{c} \in \M/(\M \cap \NN)} \frac{\rho(\NN + \vec{c})}{\rho(\NN)} \\
&\geq \sum_{\vec{c} \in \M/(\M \cap \NN)} \frac{\rho((\M \cap \NN) + \vec{c})}{\rho(\M \cap \NN)} &\text{(Prop.~\ref{prop:monotonesublattice})}\\
&= \frac{\rho(\M)}{\rho(\M \cap \NN)}
\; .
\end{align*}
The result follows by rearranging.
\end{proof}

\subsubsection*{Acknowledgements } We thank Tom Price for helpful discussion, and the anonymous referees for their comments.

\bibliographystyle{alpha}

\begin{thebibliography}{ADRS15}

\bibitem[ADRS15]{AggarwalDRS15}
Divesh Aggarwal, Daniel Dadush, Oded Regev, and Noah Stephens{-}Davidowitz.
\newblock Solving the shortest vector problem in $2^n$ time via discrete
  {G}aussian sampling.
\newblock In {\em STOC}, 2015.
\newblock Available at \url{http://arxiv.org/abs/1412.7994}.

\bibitem[ADS15]{ADS15}
Divesh Aggarwal, Daniel Dadush, and Noah Stephens{-}Davidowitz.
\newblock Solving the {C}losest {V}ector {P}roblem in $2^n$ time--- {T}he
  discrete {G}aussian strikes again!
\newblock In {\em FOCS}, 2015.
\newblock Available at \url{http://arxiv.org/abs/1409.8063}.

\bibitem[Ban93]{banaszczyk}
Wojciech Banaszczyk.
\newblock New bounds in some transference theorems in the geometry of numbers.
\newblock {\em Mathematische Annalen}, 296(4):625--635, 1993.

\bibitem[BPY01]{BianePY01}
Philippe Biane, Jim Pitman, and Marc Yor.
\newblock Probability laws related to the {J}acobi theta and {R}iemann zeta
  functions, and {B}rownian excursions.
\newblock {\em Bull. Amer. Math. Soc. (N.S.)}, 38(4):435--465, 2001.

\bibitem[Chu76]{Chung76}
Kai~Lai Chung.
\newblock Excursions in {B}rownian motion.
\newblock {\em Ark. Mat.}, 14(2):155--177, 1976.

\bibitem[Dad13]{priv:Daniel}
Daniel Dadush.
\newblock {P}rivate communication, 2013.

\bibitem[DRS14]{cvpp}
Daniel Dadush, Oded Regev, and Noah Stephens{-}Davidowitz.
\newblock On the closest vector problem with a distance guarantee.
\newblock In {\em {IEEE} 29th Conference on Computational Complexity}, pages
  98--109, 2014.
\newblock Available at \url{http://arxiv.org/abs/1409.8063}.

\bibitem[GPV08]{GPV08}
Craig Gentry, Chris Peikert, and Vinod Vaikuntanathan.
\newblock Trapdoors for hard lattices and new cryptographic constructions.
\newblock In {\em STOC}, pages 197--206, 2008.

\bibitem[HR13]{HavivR13}
Ishay Haviv and Oded Regev.
\newblock The {E}uclidean distortion of flat tori.
\newblock {\em J. Topol. Anal.}, 5(2):205--223, 2013.

\bibitem[MR07]{MR04}
Daniele Micciancio and Oded Regev.
\newblock Worst-case to average-case reductions based on {G}aussian measures.
\newblock {\em SIAM J. Comput.}, 37(1):267--302 (electronic), 2007.

\bibitem[Mum07]{Mumford}
David Mumford.
\newblock {\em Tata lectures on theta. {I}}.
\newblock Modern Birkh\"auser Classics. Birkh\"auser Boston, Inc., Boston, MA,
  2007.
\newblock With the collaboration of C. Musili, M. Nori, E. Previato and M.
  Stillman, Reprint of the 1983 edition.

\bibitem[New76]{Newman76}
Charles~M. Newman.
\newblock Fourier transforms with only real zeros.
\newblock {\em Proc. Amer. Math. Soc.}, 61(2):245--251 (1977), 1976.

\bibitem[Per13]{PeresPrivate}
Yuval Peres, 2013.
\newblock Personal communication.

\bibitem[Pri14a]{PriceMO2}
{Thomas McMurray} Price.
\newblock Inequality regarding sum of gaussian on lattices.
\newblock MathOverflow, 2014.
\newblock \url{http://mathoverflow.net/q/160507} (version: 2014-12-01).

\bibitem[Pri14b]{PriceMO1}
{Thomas McMurray} Price.
\newblock Is the heat kernel more spread out with a smaller metric?
\newblock MathOverflow, 2014.
\newblock \url{http://mathoverflow.net/q/186428} (version: 2014-12-11).

\bibitem[Pri15]{PricePaper}
{Thomas McMurray} Price.
\newblock Numerical cohomology, 2015.
\newblock \url{http://arxiv.org/abs/1509.05797}.

\bibitem[Pri16]{Price16}
{Thomas McMurray} Price.
\newblock {An inequality for the heat kernel on an Abelian Cayley graph}, 2016.
\newblock \url{https://arxiv.org/abs/1612.07306}.

\bibitem[Rie57]{Riemann}
Bernhard Riemann.
\newblock Theorie der {A}bel'schen {F}unctionen.
\newblock {\em Journal f\"{u}r die reine und angewandte Mathematik},
  54:101--155, 1857.

\bibitem[Roy14]{RoyenGCC}
Thomas Royen.
\newblock A simple proof of the {G}aussian correlation conjecture extended to
  some multivariate gamma distributions.
\newblock {\em Far East J. Theor. Stat.}, 48(2):139--145, 2014.

\bibitem[RS16a]{RegevShinkar}
Oded Regev and Igor Shinkar.
\newblock A counterexample to monotonicity of relative mass in random walks.
\newblock {\em Electronic Communications in Probability}, 2016.
\newblock To appear. Available at \url{http://arxiv.org/abs/1506.08631}.

\bibitem[RS16b]{rev_mink}
Oded {Regev} and Noah {Stephens-Davidowitz}.
\newblock {A Reverse {M}inkowski Theorem}, 2016.
\newblock \url{https://arxiv.org/abs/1611.05979}.

\end{thebibliography}

\end{document}